\documentclass[draft]{amsart}

\usepackage{pgf,tikz}
\usetikzlibrary{arrows}
\usepackage{cite}
\usepackage{stmaryrd}

\usepackage{amssymb,graphicx,enumerate,color}
\usepackage{multirow}

\usepackage{graphicx}

\setlength{\textheight}{220mm} \setlength{\textwidth}{155mm}
\setlength{\oddsidemargin}{1.25mm}
\setlength{\evensidemargin}{1.25mm} \setlength{\topmargin}{0mm}

\renewcommand{\l}{\lambda}

\newcommand{\beqs}{\begin{equation*}}
\newcommand{\eeqs}{\end{equation*}}
\numberwithin{equation}{section}
 \theoremstyle{plain}
\newtheorem{theorem}{Theorem}[section]
\newtheorem{lemma}[theorem]{Lemma}

\newtheorem{conjecture}[theorem]{Conjecture}

\theoremstyle{remark}

\theoremstyle{empty}

\hyphenation{auto-maton}

\begin{document}

\makeatletter
\def\imod#1{\allowbreak\mkern10mu({\operator@font mod}\,\,#1)}
\makeatother

\author{Alexander Berkovich}
   \address{Department of Mathematics, University of Florida, 358 Little Hall, Gainesville FL 32611, USA}
   \email{alexb@ufl.edu}

\author{Ali Kemal Uncu}
   \address{Department of Mathematics, University of Florida, 358 Little Hall, Gainesville FL 32611, USA}
   \email{akuncu@ufl.edu}

\title[On some polynomials and series of Bloch--P\'olya type]{On some polynomials and series of Bloch--P\'olya type}



\begin{abstract} We will show that $(1-q)(1-q^2)\dots (1-q^m)$ is a polynomial in $q$ with coefficients from $\{-1,0,1\}$ iff $m=1,\ 2,\ 3,$ or 5 and explore some interesting consequences of this result. We find explicit formulas for the $q$-series coefficients of $(1-q^2)(1-q^3)(1-q^4)(1-q^5)\dots$ and $(1-q^3)(1-q^4)(1-q^5)(1-q^6)\dots$. In doing so, we extend certain observations made by Sudler in 1964. We also discuss the classification of the products $(1-q)(1-q^2)\dots (1-q^m)$ and some related series with respect to their absolute largest coefficients.
\end{abstract}   
   
\keywords{Pentagonal numbers, Bloch--P\'olya type series, $q$-series identities, $q$-binomial theorem, partition theorems}

 \subjclass[2010]{05A17, 05A19, 05A30, 11B65, 11P81}

\date{\today}
   
\maketitle

\section{Introduction and Background}

Polynomials with coefficients from the set $\{-1,0,1\}$ have been first studied by Bloch and P\'olya \cite{BlochPolya}. Their study sparked interest especially about the roots of these polynomials. An interested reader may refer to these prominent results (listed in order of publication) \cite{Littlewood}, \cite{BorweinErdeyliKos}, \cite{Erdelyi}, and \cite{BorweinMossinghoff}. The first two references focus on \textit{Littlewood polynomials}, i.e. polynomials where all coefficients are $\pm 1$. In the same spirit, we would like to call polynomials (and series) with integer coefficients from the set $\{-1,0,1\}$, \textit{Bloch--P\'olya type} polynomials (and series, resp.).

We start by defining a \textit{$q$-Pochhammer symbol} or a \textit{rising $q$-factorial}. For variables $a,\ q$ and a non-negative integer $L$, we define
\begin{align*}
(a;q)_L &= \prod_{i=0}^{L-1} (1-aq^i),\\
(a;q)_\infty &= \lim_{L\rightarrow\infty} (a;q)_L, \text{ for }|q|<1.
\end{align*}

The rising $q$-factorials have been studied extensively, one notable example is Euler's Pentagonal Number Theorem \cite[Cor 1.7, pg 11]{Theory_of_Partitions}.

\begin{theorem}[Euler's Pentagonal Number Theorem, 1750]\label{EPNT_THM} \begin{align}
\label{EPNT} (q&;q)_\infty = \sum_{n=-\infty} ^{\infty} (-1)^n q^{n(3n-1)/2} \\\nonumber &= 1 -  q-  q^2+ q^5+  q^7-  q^{12}-  q^{15}+q^{22}+  q^{26}- q^{35}- q^{40}+  q^{51}+q^{57}-q^{70}-q^{77}+q^{92}\dots .
\end{align}
\end{theorem}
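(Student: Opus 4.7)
The plan is to prove Euler's Pentagonal Number Theorem by Franklin's bijective argument, which I find the most transparent. First, I expand the product combinatorially: since $(q;q)_\infty = \prod_{n \geq 1}(1-q^n)$, multiplying out shows that the coefficient of $q^N$ equals $D_e(N) - D_o(N)$, where $D_e(N)$ and $D_o(N)$ count partitions of $N$ into an even (respectively odd) number of \emph{distinct} positive parts. So the theorem reduces to establishing, for every $N$, an almost-perfect sign-reversing involution on the set of partitions of $N$ into distinct parts, whose fixed points are exactly the partitions of $N$ into $n$ parts of the forms $(2n-1, 2n-2, \dots, n)$ and $(2n, 2n-1, \dots, n+1)$, whose total weights are the two branches $n(3n-1)/2$ and $n(3n+1)/2$ of the pentagonal numbers.

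Second, I define Franklin's involution. Given a partition $\lambda_1 > \lambda_2 > \dots > \lambda_k > 0$, let $s = \lambda_k$ be the smallest part and let $\sigma$ be the largest integer such that $\lambda_1, \lambda_2, \dots, \lambda_\sigma$ is a run of consecutive integers $\lambda_1, \lambda_1 - 1, \dots, \lambda_1 - \sigma + 1$ at the top. There are two operations: if $s \leq \sigma$, remove the smallest part $s$ and add $1$ to each of the top $s$ parts; if $s > \sigma$, subtract $1$ from each of the top $\sigma$ parts and append $\sigma$ as a new smallest part. Either move toggles the parity of the number of parts, so it is sign-reversing on $D_e - D_o$.

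Third, I verify that whenever the move is legal it produces another partition into distinct parts and that the two operations are inverse to each other. The move fails precisely when it would make two parts equal (or violate positivity): this occurs exactly when the smallest part $\lambda_k$ is itself part of the top staircase, namely when $s = \sigma$ with $k = \sigma$ giving $\lambda = (2\sigma-1, 2\sigma-2, \dots, \sigma)$ of weight $\sigma(3\sigma-1)/2$, or when $s = \sigma + 1$ with $k = \sigma$ giving $\lambda = (2\sigma, 2\sigma - 1, \dots, \sigma + 1)$ of weight $\sigma(3\sigma+1)/2$. In both fixed point families there are exactly $\sigma$ parts, so each contributes $(-1)^\sigma$; grouping the two families and letting $n$ range over $\mathbb{Z}$ (with $n \geq 1$ for the first branch and $n \leq -1$ for the second via the substitution $n \mapsto -n$) yields the right-hand side $\sum_{n=-\infty}^{\infty}(-1)^n q^{n(3n-1)/2}$, with the $n=0$ term accounting for the empty partition.

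The main obstacle I anticipate is the careful case analysis for the boundary: namely, when the smallest part lies inside the top staircase, the two operations can conflict with each other or with the distinctness condition. I would enumerate all such boundary situations and verify that the only configurations not paired off are the two pentagonal staircases identified above. Once this book-keeping is done, the identity drops out immediately by comparing coefficients on both sides.
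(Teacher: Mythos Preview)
Your Franklin involution argument is correct and is the classical combinatorial proof of this identity; the case analysis you outline for the boundary configurations is the right one, and the fixed points you identify are exactly the two pentagonal staircases.

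However, the paper does not actually prove Theorem~\ref{EPNT_THM}. It is quoted in the introduction as a known background result with a citation to \cite[Cor.~1.7, p.~11]{Theory_of_Partitions}, and is then used freely throughout. The only hint the paper gives toward a proof is the remark that $F_1(q)$ ``plays an instrumental role in Euler's original proof'' of the theorem, together with the identity $qF_1(q)=1-(q;q)_\infty$ from \eqref{GF_of_F_1_case}; that points to Euler's own analytic/recursive proof (see \cite{AndrewsEuler}) rather than to Franklin's bijection. So your approach is genuinely different from anything the paper develops: it is self-contained and purely combinatorial, whereas the paper simply imports the result and, to the extent it gestures at any argument, alludes to Euler's generating-function method.
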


The identity \eqref{EPNT} shows that the $q$-Pochhammer symbol $(q;q)_\infty$ can be represented as Bloch--P\'olya type series. 

We will also require the $q$-binomial theorem \cite[Thm 2.1, p. 17]{Theory_of_Partitions}:

\begin{theorem}[$q$-Binomial Theorem]\label{qBin}
\begin{equation}\label{qbinomial}
\sum_{n\geq 0} \frac{(a;q)_n}{(q;q)_n} t^n = \frac{(ta;q)_\infty}{(t;q)_\infty.}
\end{equation}
\end{theorem}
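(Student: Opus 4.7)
The plan is to prove the identity via a functional equation. Let
$$f(t) := \sum_{n \geq 0} \frac{(a;q)_n}{(q;q)_n} t^n \qquad \text{and} \qquad g(t) := \frac{(at;q)_\infty}{(t;q)_\infty},$$
treated as formal power series in $t$ (or, when $|q|,|t|<1$, as analytic functions). Observe that $f(0)=g(0)=1$, and note that $f$ is built from the ratios we must identify while $g$ is the infinite product side of \eqref{qbinomial}.

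The first step is to verify that $g$ satisfies the functional equation
$$(1-t)\, g(t) = (1-at)\, g(qt).$$
This is immediate from the product definition: since $(t;q)_\infty = (1-t)(qt;q)_\infty$ we have $(1-t)/(t;q)_\infty = 1/(qt;q)_\infty$, while $(at;q)_\infty = (1-at)(aqt;q)_\infty$; both sides therefore collapse to $(at;q)_\infty/(qt;q)_\infty$.

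The second step is to show that the functional equation, combined with the normalisation $F(0)=1$, determines the coefficients of any power series $F(t)=\sum_{n\geq 0}c_n t^n$ satisfying it. Comparing coefficients of $t^n$ for $n \geq 1$ yields $c_n - c_{n-1} = q^n c_n - a q^{n-1} c_{n-1}$, equivalently
$$c_n \;=\; \frac{1-aq^{n-1}}{1-q^n}\, c_{n-1}.$$
Since $1-q^n \neq 0$, iterating from $c_0=1$ forces $c_n = (a;q)_n/(q;q)_n$, which are precisely the coefficients of $f(t)$. Thus $f = g$ and \eqref{qbinomial} follows.

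The only non-routine point is recognising the functional equation for $g$; once it is in place, the rest is straightforward coefficient bookkeeping and a one-line recursion. An alternative route would be to expand $(at;q)_\infty$ and $1/(t;q)_\infty$ separately using Euler's two classical single-variable expansions and then multiply the resulting series by a Cauchy product, but the functional-equation approach is cleaner and sidesteps that intermediate manipulation.
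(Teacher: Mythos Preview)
Your proof is correct and is in fact the standard argument (essentially the one in Andrews' \emph{The Theory of Partitions}, which the paper cites). Note, however, that the paper itself does not prove this theorem: it is stated as background with a reference to \cite[Thm~2.1, p.~17]{Theory_of_Partitions}, so there is no ``paper's own proof'' to compare against. Your functional-equation approach is exactly the route taken in that reference, so in that sense the approaches coincide.
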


Next, we define the following family of sums,
\begin{align} \label{recurr_F_finite}F_{k,M}(q)&:= \sum_{j=0}^{M} q^{kj}(q;q)_j,\\
\label{recurr_F_inf} F_{k}(q)&:=\lim_{M\rightarrow\infty}F_{k,M}(q)= \sum_{j\geq0} q^{kj}(q;q)_j,
\end{align} for positive integers $k$ and $M$. The series $F_{k}(q)$ can be made convergent by picking $|q|<1$.


These series were introduced by Eden and they are closely related with the theory of partitions. Eden observed \[(-q)^k F_{k}(q) = \sum_{n,m > 0} (-1)^m p_{k}(n,m) q^n,\] where $p_{k}(n,m)$ is the number of non-empty partitions of $n$ into exactly $m$ parts where the largest part appears $k$ times and all the other parts appear distinctly \cite{Eden}. Moreover, the series $F_1(q)$ plays an instrumental role in Euler's original proof of Theorem~\ref{EPNT_THM} \cite{AndrewsEuler}. Recently $F_{i,M}(q)$ for $i=1,\ 2,$ and $3$ arose naturally in our studies of partitions with bounded gaps between largest and smallest parts \cite{BerkovichUncu7}.

In the following sections, among other observations, we will prove the next two theorems.

\begin{theorem}\label{qq_m_not_Bloch_Polya} For $m\in\mathbb{Z}_{\geq0}$, $(q;q)_m$ is of Bloch--P\'olya type iff $m=0,\, 1,\, 2,\, 3$ or 5.
\end{theorem}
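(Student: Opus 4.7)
The plan is to handle the two directions of the biconditional separately.

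For the ``if'' direction I would verify by direct expansion that $(q;q)_m$ has all coefficients in $\{-1, 0, 1\}$ for $m \in \{0, 1, 2, 3, 5\}$; the representative computation is
\[
(q;q)_5 = 1 - q - q^2 + q^5 + q^6 + q^7 - q^8 - q^9 - q^{10} + q^{13} + q^{14} - q^{15},
\]
and the smaller $m$ are handled similarly.

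For the ``only if'' direction, the main tool is the factorisation $(q;q)_m = (q;q)_\infty / (q^{m+1};q)_\infty$, combined with Theorem~\ref{qBin} applied at $a = 0,\ t = q^{m+1}$ to expand
\[
\frac{1}{(q^{m+1};q)_\infty} = \sum_{n \geq 0} \frac{q^{(m+1)n}}{(q;q)_n}.
\]
Writing $a_n(m) := [q^n](q;q)_m$ and $a_j(\infty) := [q^j](q;q)_\infty$, and letting $p_{>m}(k)$ denote the number of partitions of $k$ into parts exceeding $m$, Euler's Pentagonal Number Theorem yields the convolution
\[
a_n(m) = \sum_j a_j(\infty)\, p_{>m}(n - j).
\]
In the linear window $m + 1 \leq n \leq 2m + 1$ only one-part partitions contribute, collapsing this to
\[
a_n(m) = a_n(\infty) + S(n - m - 1), \qquad S(k) := \sum_{j = 0}^{k} a_j(\infty).
\]
The sequence $S$ is the coefficient sequence of $(q^2;q)_\infty = (q;q)_\infty/(1 - q)$; because the nonzero terms in \eqref{EPNT} come in adjacent same-sign pairs whose signs alternate from one pair to the next, one finds $S(k) \in \{-1, 0, 1\}$ for every $k$. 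The cases $m = 4$ and $m = 6$ then fall out instantly from $a_{m+1}(m) = a_{m+1}(\infty) + S(0) = a_{m+1}(\infty) + 1 = 2$, since $a_5(\infty) = a_7(\infty) = 1$.

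For $m \geq 7$, I would exhibit, for each such $m$, an $n$ at which $a_n(\infty)$ and $S(n - m - 1)$ are nonzero and of the same sign, forcing $|a_n(m)| \geq 2$. Usually the witness lies in the linear window: the generalised pentagonal numbers $j(3j \pm 1)/2$ are spaced by $O(\sqrt{n})$, so $[m+1, 2m+1]$ always contains at least one of them once $m \geq 7$, and one compares its sign with the locally constant value of $S$ on the corresponding block. For the occasional $m$ (for instance $m = 26$) where every pentagonal in the linear window mismatches in sign, I would use the next degree $n = 2m+2$, where the partition $(m+1)+(m+1)$ produces $p_{>m}(2m+2) = 2$ and hence
\[
a_{2m+2}(m) = 1 + S(m + 1) + a_{2m+2}(\infty),
\]
which supplies a second test. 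The main obstacle is making this sign-matching uniform across all $m \geq 7$; I would do this by partitioning $m$ into classes according to where it falls among the $\{-1, 0, 1\}$-blocks of $S$, exhibiting an explicit pentagonal witness $n$ in each class, and dispatching any residual small values of $m$ by direct computation.
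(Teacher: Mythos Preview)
Your setup is correct: the convolution $a_n(m)=\sum_j a_j(\infty)\,p_{>m}(n-j)$ does collapse on the window $m+1\le n\le 2m+1$ to $a_n(m)=a_n(\infty)+S(n-m-1)$, and the cases $m=4,6$ drop out as you say. But the argument for $m\ge 7$ is not a proof yet; it is a plan whose hard step---showing that for \emph{every} such $m$ some pentagonal number $n$ in the window (or the backup $n=2m+2$) produces matching signs---is only sketched. Since $a_n(\infty)$ and $S(n-m-1)$ are both in $\{-1,0,1\}$, you get no slack at all: for each $m$ you must locate an exact coincidence of signs among the $O(\sqrt m)$ pentagonal numbers in the window, against the block pattern of $S$ described in Theorem~\ref{q2;q_inf_thm}. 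You have not carried out this case analysis, and it is not obvious that a clean uniform witness exists (indeed you already need the $n=2m+2$ escape hatch at $m=26$); as written, the ``partition $m$ into classes and exhibit a witness in each'' step is where the real work lies, and it is missing.

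The paper takes a different route that sidesteps this obstacle entirely. Starting from the same $q$-binomial identity, it organises the expansion as $(q;q)_{m-1}=\sum_{i\ge 0}q^{mi}(q^{i+1};q)_\infty$ and looks at the \emph{quadratic} window $2m\le n<3m$, where only the first three summands contribute. The point is that the third summand involves $(q^3;q)_\infty$, whose coefficients are \emph{unbounded}: by Theorem~\ref{q3;q_infty} one has $\llfloor q^{69}\rrfloor(q^3;q)_\infty=4$. Hence for $m>69$, $\llfloor q^{2m+69}\rrfloor(q;q)_{m-1}$ is $4$ plus two corrections each in $\{-1,0,1\}$, so automatically $\ge 2$, with no sign-matching required. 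The finite range $m\le 69$ is then dispatched by direct computation. In short, the paper buys uniformity for large $m$ by going one term deeper in the expansion, trading your delicate $\pm1$ sign analysis for a single crude inequality; your approach could in principle be completed, but as it stands the decisive step is not done.
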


\begin{theorem}\label{highlight_F} For $k\geq 7$, there is no polynomial $f(q)$ such that $F_k(q) + f(q)$ is of Bloch-P\'olya type.
\end{theorem}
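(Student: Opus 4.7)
The plan is to extract a clean closed form for $F_k(q)$ and reduce the theorem to a statement about $(q;q)_{k-1}(q;q)_\infty$. First I would establish the recurrence
\[(1-q^k)F_k(q)=1-q^{k+1}F_{k+1}(q),\]
by writing $(q;q)_j=\sum_{S\subseteq\{1,\ldots,j\}}(-1)^{|S|}q^{\sum_{s\in S}s}$ inside \eqref{recurr_F_inf} and regrouping the double sum according to $\max S$. Iterating this recurrence in $k$ (the remainder term has $q$-valuation $nk+n(n+1)/2\to\infty$, so the iteration converges in $\mathbb{Z}[[q]]$) produces the closed form
\[F_k(q)=\sum_{n\ge 0}\frac{(-1)^n q^{nk+n(n+1)/2}}{(q^k;q)_{n+1}}.\]
Writing $(q^k;q)_{n+1}=(q;q)_{k+n}/(q;q)_{k-1}$, substituting $m=k+n$, and completing the inner sum down to $m=0$ by means of Euler's identity $\sum_{m\ge 0}(-1)^m q^{m(m+1)/2}/(q;q)_m=(q;q)_\infty$ (a standard consequence of Theorem~\ref{qBin}), one arrives at
\[(-1)^k q^{k(k+1)/2}F_k(q)=(q;q)_{k-1}(q;q)_\infty-R_k(q),\]
where $R_k(q):=\sum_{m=0}^{k-1}(-1)^m q^{m(m+1)/2}(q^{m+1};q)_{k-1-m}$ is a manifest polynomial of degree $k(k-1)/2$.

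With this identity, the theorem becomes equivalent to showing that for $k\ge 7$ no polynomial $\tilde f$ can make $(q;q)_{k-1}(q;q)_\infty+\tilde f$ of Bloch--P\'olya type; equivalently, infinitely many coefficients of $(q;q)_{k-1}(q;q)_\infty$ must exceed $1$ in absolute value. Since $k-1\in\{6,7,8,\ldots\}$ when $k\ge 7$, Theorem~\ref{qq_m_not_Bloch_Polya} supplies an integer $i$ with $|[q^i](q;q)_{k-1}|\ge 2$. Expanding $(q;q)_\infty$ by the pentagonal number theorem~\eqref{EPNT},
\[[q^N](q;q)_{k-1}(q;q)_\infty=\sum_{p}(\pm 1)\,[q^{N-p}](q;q)_{k-1},\]
the sum running over generalized pentagonal numbers $p$ with $N-p\in[0,k(k-1)/2]$. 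Because the gaps between consecutive generalized pentagonal numbers alternate as $n$ and $2n+1$ and hence grow without bound, for every $n$ larger than an explicit threshold depending on $k$ the number $p_n=n(3n-1)/2$ is the unique pentagonal number lying inside the window around $N_n:=p_n+i$. For such $N_n$ the displayed sum collapses to $\pm[q^i](q;q)_{k-1}$, an integer of absolute value at least $2$, producing infinitely many bad coefficients $N_n\to\infty$ and the desired contradiction.

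The main technical hurdle is the identity $(-1)^k q^{k(k+1)/2}F_k(q)=(q;q)_{k-1}(q;q)_\infty-R_k(q)$: assembling the pieces (recurrence, iteration, Euler's identity) correctly requires some bookkeeping. Once that identity is in place, the closing step is a transparent sparsity-of-pentagonal-numbers computation, powered by Theorem~\ref{qq_m_not_Bloch_Polya}, which uniformly furnishes the large coefficient of $(q;q)_{k-1}$ needed to drive the contradiction for every $k\ge 7$.
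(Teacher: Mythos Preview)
Your proof is correct and follows essentially the same route as the paper: you arrive at the identical identity \eqref{recurr_back} (your $R_k(q)$ is exactly the polynomial part there after the substitution $m=k-1-i$), and you then finish with the same sparsity-of-pentagonal-numbers argument powered by Theorem~\ref{qq_m_not_Bloch_Polya}. The only cosmetic difference is how the identity is derived---the paper iterates the recurrence of Lemma~\ref{finite_recurrence_lemma} downward to $F_1$ and uses $qF_1(q)=1-(q;q)_\infty$, whereas you iterate it upward and invoke Euler's identity $\sum_{m\ge0}(-1)^m q^{m(m+1)/2}/(q;q)_m=(q;q)_\infty$---but these are two ends of the same telescoping sum.
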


C. Sudler, in his 1964 papers \cite{Sudler1, Sudler2} studied the maximum coefficient, $a_m^*$, of the power series expansion of $(q;q)_m$. He noted that $a_{m}^*$ is unbounded as $m$ gets larger using a special case of Theorem~\ref{qBin} and that $\log\, a_{m}^*\sim K m$ where $K>0$ using Cauchy's integral formula, in the respective papers. The unboundedness of $a_m^*$ was shown by observing that $(q^3;q)_\infty$ has unbounded $q$-series coefficients.

In Section~\ref{Section_Pochhammer}, we start with some observations about pentagonal numbers. After that we give explicit formulas for the $q^j$'s coefficient in the power series of both $(q^2;q)_\infty$ and $(q^3;q)_\infty$, for any $j\in \mathbb{Z}_{\geq 0}$. This section is finalized with a proof of Theorem~\ref{qq_m_not_Bloch_Polya}. Section~\ref{Section_F} starts by proving a recurrence relation for $F_{k,M}(q)$ polynomials, and the rest of the section deals with Bloch--P\'olya properties of $F_k(q)$ series. We discuss the classification of the polynomials $(q;q)_m$, and the series $F_k(q)$ with respect to their coefficients in a broader perspective than the Bloch--P\'olya property in Section~\ref{Beyond}.

\section{$q$-series Coefficients of $(q;q)_m$, $(q^2;q)_\infty$, and $(q^3;q)_\infty$. Proof of Theorem~\ref{qq_m_not_Bloch_Polya}}\label{Section_Pochhammer}

We start by observing that the minimum gap between pentagonal numbers increase. An alternative way of writing Theorem~\ref{EPNT_THM} is \[(q;q)_\infty = 1+\sum_{n=1}^\infty (-1)^n( q^{n(3n-1)/2} + q^{n(3n+1)/2}).\] Let \begin{equation}\label{penta_families}p_1(n):= \frac{n(3n-1)}{2}\ \text{  and  }\ p_2(n) := \frac{n(3n+1)}{2}\end{equation} be the two families of pentagonal numbers for $n\geq 0$. Observe that there is a natural order between these families \begin{equation}\label{penta_ordering}0=p_1(0)=p_2(0)<1=p_1(1) < 2=p_2(1) < 5=p_1(2)<\dots < p_1(n)<p_2(n)<p_1(n+1)<\dots\end{equation} for any $n\geq 1$.
Also note that \begin{equation}\label{difference1} p_2(n)-p_1(n) = n\ \text{ and }\ p_1(n+1)-p_2(n) = 2n+1.\end{equation} This proves that 

\begin{lemma}\label{penta_gap} For any $M>0$ the gap between successive pentagonal numbers  is
\begin{equation*} p_2(n)-p_1(n) >M\ \text{ and }\ p_1(n+1)-p_2(n) >M,\end{equation*} for all $n>M$.
\end{lemma}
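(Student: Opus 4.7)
The plan is to derive the lemma as an immediate consequence of the two difference identities already recorded in \eqref{difference1}, namely
\[
p_2(n)-p_1(n)=n \qquad\text{and}\qquad p_1(n+1)-p_2(n)=2n+1.
\]
There is no real obstacle here; the only point worth noting is that, thanks to the ordering \eqref{penta_ordering}, the two quantities $p_2(n)-p_1(n)$ and $p_1(n+1)-p_2(n)$ exhaust all gaps between consecutive pentagonal numbers (for $n\geq 1$), so bounding both of them uniformly for $n>M$ is enough.

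The body of the argument is therefore a one-line verification: if $n>M$, then
\[
p_2(n)-p_1(n)=n>M,
\]
and similarly
\[
p_1(n+1)-p_2(n)=2n+1>2M+1>M.
\]
Since every consecutive pair of pentagonal numbers appearing in \eqref{penta_ordering} is of one of these two types, this suffices.

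For completeness one could also quickly re-derive \eqref{difference1} from the definitions \eqref{penta_families}, simplifying $\tfrac{n(3n+1)}{2}-\tfrac{n(3n-1)}{2}$ and $\tfrac{(n+1)(3n+2)}{2}-\tfrac{n(3n+1)}{2}$, but since these identities are already stated in the excerpt we may simply cite them. The main (mild) subtlety is not the algebra but the observation that the stated inequalities really do cover \emph{every} successive gap, which is handled by invoking the interleaving in \eqref{penta_ordering}.
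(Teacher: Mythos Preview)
Your argument is correct and is essentially the same as the paper's: the lemma is stated immediately after \eqref{difference1} with the words ``This proves that,'' so the paper likewise derives it as a direct consequence of $p_2(n)-p_1(n)=n$ and $p_1(n+1)-p_2(n)=2n+1$.
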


We will use Lemma~\ref{penta_gap} to find a suitable separation point for the tail of the series \eqref{EPNT} in the following theorems.

\begin{theorem}\label{q2;q_inf_thm} The power series of \[(q^2;q)_\infty := \sum_{j\geq 0} a_j q^j\] is of Bloch--P\'olya type. Furthermore, for any $j\in\mathbb{Z}_{\geq0}$ there exists unique $n\in\mathbb{Z}_{\geq0}$ such that \[p_2(2n)\leq j< p_2(2n+2)\] and
\begin{equation}\label{aj_coeffs}a_j = \left\{ \begin{array}{cl}
1, & \text{if }\ p_2(2n)\leq j < p_1(2n+1) ,\\
-1, &\text{if }\ p_2(2n+1)\leq j < p_1(2n+2),\\
0, & \text{otherwise.}
\end{array} \right.\end{equation}
\end{theorem}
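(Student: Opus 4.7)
The plan is to exploit the factorization $(q^2;q)_\infty = (q;q)_\infty/(1-q)$. Expanding $1/(1-q) = \sum_{k\geq 0} q^k$ as a geometric series and writing $(q;q)_\infty = \sum_{k\geq 0} c_k q^k$, convolution of coefficients immediately gives
\begin{equation*}
a_j = \sum_{k=0}^{j} c_k.
\end{equation*}
By Theorem~\ref{EPNT_THM}, $c_k$ vanishes unless $k$ is a pentagonal number, and $c_{p_1(n)} = c_{p_2(n)} = (-1)^n$ for every $n \geq 0$. So $a_j$ is a telescoping partial sum that changes value only when $j$ crosses a pentagonal number.

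By the strict ordering \eqref{penta_ordering}, the distinct positive pentagonal numbers appear in the sequence $p_1(1), p_2(1), p_1(2), p_2(2), p_1(3), p_2(3), \ldots$ with no other pentagonal numbers in between, so $a_j$ is constant on each interval of integers between consecutive pentagonal numbers. Starting from $a_0 = 1$ and stepping through these transitions in order, the partial sum cycles through
\begin{equation*}
1 \to 0 \to -1 \to 0 \to 1 \to 0 \to -1 \to 0 \to \cdots ,
\end{equation*}
with the successive jumps occurring at $j = p_1(1), p_2(1), p_1(2), p_2(2), \ldots$ respectively. Collecting the intervals on which each value is attained, one sees that $a_j = +1$ precisely for $p_2(2n) \leq j < p_1(2n+1)$, $a_j = -1$ precisely for $p_2(2n+1) \leq j < p_1(2n+2)$, and $a_j = 0$ on the intervals $[p_1(m), p_2(m))$ for each $m \geq 1$. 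This exactly reproduces \eqref{aj_coeffs}, and in particular every $a_j \in \{-1, 0, 1\}$, so $(q^2;q)_\infty$ is of Bloch--P\'olya type.

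The main step requiring care is simply the combinatorial bookkeeping that matches the telescoping cycle to the interval description in \eqref{aj_coeffs}; no serious analytic obstacle arises. The underlying facts, namely Euler's Pentagonal Number Theorem and the ordering \eqref{penta_ordering}, do essentially all the work, so Lemma~\ref{penta_gap} is not strictly required at this stage, although its strengthened gap estimate will presumably become essential for the analogous but more delicate analyses of $(q^3;q)_\infty$ and of Theorem~\ref{qq_m_not_Bloch_Polya}.
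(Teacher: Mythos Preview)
Your proof is correct and follows essentially the same route as the paper: both start from $(q^2;q)_\infty = (q;q)_\infty/(1-q)$, expand $1/(1-q)$ as a geometric series, and invoke Euler's Pentagonal Number Theorem. Your formulation via the partial-sum identity $a_j=\sum_{k\le j}c_k$ is a slightly cleaner and more systematic rendering of the same computation the paper carries out by visually expanding the product term by term; the identification of the resulting constant blocks with the intervals in \eqref{aj_coeffs} is identical in both arguments.
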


\begin{proof} We start with \begin{equation}\label{q2q_inft}(q^2;q)_\infty = \frac{(q;q)_\infty}{1-q} = (1+q+q^2+q^3+\dots)(1 -  q-  q^2+ q^5+  q^7-  q^{12}-  q^{15}+\dots),\end{equation} which is clear by geometric series and Theorem~\ref{EPNT_THM}. Openly evaluating the latter product is enough to demonstrate this result:
\begin{align*}
(q^2&;q)_\infty =(1+q+q^2+q^3+q^4+q^5+\dots)(1 -  q-  q^2+ q^5+  q^7-  q^{12}-  q^{15}+q^{22}+  q^{26}- q^{35}- 
\dots)\\
 &=1+q+q^2+q^3+q^4 + q^5 + q^6 + q^7 + q^8 + q^9 +q^{10}+ q^{11} +q^{12}+q^{13}+ q^{14} +q^{15}+q^{16} +q^{17}\dots\\
  &\hspace{.65cm}-q-q^2-q^3-q^4 - q^5 - q^6 - q^7 - q^8- q^9 -q^{10}- q^{11} -q^{12}-q^{13}- q^{14} -q^{15}-q^{16} -q^{17}\dots\\
  &\hspace{1.25cm}-q^2 -q^3-q^4 - q^5 - q^6 - q^7 - q^8 - q^9 -q^{10}- q^{11} -q^{12}-q^{13}- q^{14} -q^{15}-q^{16} -q^{17}\dots\\
  &\hspace{3.55cm}+ q^5 + q^6 + q^7 + q^8+ q^9 +q^{10}+ q^{11} +q^{12}+q^{13}+ q^{14} +q^{15}+q^{16} +q^{17}\dots\\
  &\hspace{5.1cm} + q^7 + q^8 + q^9 +q^{10}+ q^{11} +q^{12}+q^{13}+ q^{14} +q^{15}+q^{16} +q^{17}\dots\\
  &\hspace{9.1cm} -q^{12}-q^{13}- q^{14} -q^{15}-q^{16}-q^{17}\dots\\
  &\hspace{11.8cm}-q^{15}-q^{16}-q^{17}\dots\\[-1.5ex]\\
  &=1\hspace{.65cm}-q^2 -q^3 -q^4\hspace{1.5cm}+q^7 +q^8+q^9+q^{10}+ q^{11}\hspace{2.7cm} -q^{15}-q^{16}-q^{17}\dots
\end{align*}

The sign changes at every other non-zero coefficient term in the pentagonal numbers series \eqref{EPNT} makes sure that the coefficients of \eqref{q2q_inft} are in the set $\{-1,0,1\}$, when $(q;q)_\infty$ is divided by $(1-q)$. 

With the 0 coefficients explicitly written, we have \begin{equation}\label{explicit_formula} (q^2;q)_\infty = 1+0\,q-q^2 -q^3 -q^4+0\,q^5+0\,q^6+q^7 +q^8+q^9+q^{10}+ q^{11}+0\,q^{12}+0\,q^{13}+0\,q^{14}-q^{15}\dots.\end{equation}

For $n\geq 1$, it is clear that the $n$-th non-zero coefficient block starts at $p_2(n-1)$. This block is of the size $2n-1$, and its coefficients are all $(-1)^{n+1}$. Moreover, a zero coefficient block of size $n$ follows the $n$-th non-zero coefficient block.
\end{proof}

Next we observe that, for $n\geq 0$ and $r=0,$ $1$, \begin{equation}
\sum_{2j+r\in \mathcal{I}_n} a_{2j+r} = -1,
\end{equation}
where $\mathcal{I}_n := \{p_2(2n),\dots,p_1(2n+2)-1\}$ and $a_j$ is as in Theorem~\ref{q2;q_inf_thm}. It is clear that the number of $-1$ coefficients $a_j$ in $p_2(2n)$ to $p_1(2n+2)-1$ is $2$ more than the $+1$ coefficients. This observation is true due to the number of zero coefficients in this interval being an odd number. Another way of seeing this is to observe $p_2(2n)$ and $p_2(2n+1)$ having the same parity, for $n\geq 0$.



Let \begin{equation}\label{b_i_coeffs}(q^3;q)_\infty = \sum_{i\geq 0} b_i q^i\end{equation} be the power series representation. Since $(q^3;q)_\infty = (q^2;q)_\infty / (1-q^2)$, it is clear that \[b_i = \sum_{\substack{j\geq 0\\ i-2j\geq0}} a_{i-2j},\] where $a_j$ is as in Theorem~\ref{q2;q_inf_thm}. With that, one can prove the following.

\begin{theorem}\label{q3;q_infty} For every integer $i\geq 0$, let $b_i$ be as in \eqref{b_i_coeffs}, there exists a unique integer $n\geq 0$ such that \[p_1(2n)-2\leq i \leq p_1(2n+2)-3.\] Then, the power series coefficients $b_i$ of $(q^3;q)_\infty$ is given explicitly by
\[b_i = \left\{ \begin{array}{cl}
-n, & \text{if  }\ p_1(2n)-2 \leq i \leq p_2(2n)-1,\\[-1.5ex]\\
1-n + \lfloor \frac{i-p_2(2n)}{2} \rfloor, & \text{if  }\ p_2(2n) \leq i \leq p_1(2n+1)-2,\\[-1.5ex]\\
1+n, & \text{if  }\ p_1(2n+1)-1 \leq i \leq p_2(2n+1)-2\text{ and }i\equiv p_2(2n)\ (\text{mod }2),\\[-1.5ex]\\
n, & \text{if  }\ p_1(2n+1)-1 \leq i \leq p_2(2n+1)-2\text{ and }i\not\equiv p_2(2n)\ (\text{mod }2),\\[-1.5ex]\\
n - \lceil \frac{i-p_2(2n+1)}{2}\rceil, & \text{if  }\ p_2(2n+1)-1 \leq i \leq p_1(2n+2)-3,\\
\end{array}\right.\]
where $\lfloor x \rfloor$ is the greatest integer $\leq x$, and $\lceil x \rceil$ is the smallest integer $\geq x$.
\end{theorem}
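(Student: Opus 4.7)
The plan is to exploit the identity $(1-q^2)(q^3;q)_\infty = (q^2;q)_\infty$, which on comparing coefficients gives $b_i - b_{i-2} = a_i$ for $i \geq 2$ with $b_0 = 1$ and $b_1 = 0$. Unrolling this recursion yields
\[
b_i = \sum_{\substack{0 \leq k \leq i \\ k \equiv i \pmod{2}}} a_k,
\]
so each $b_i$ is a partial sum of the $\{-1,0,1\}$-valued coefficients from Theorem~\ref{q2;q_inf_thm}, restricted to a single parity class.

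The key reduction is the parity refinement of the remark following Theorem~\ref{q2;q_inf_thm}, namely that $\sum_{k \in \mathcal{I}_m,\, k \equiv r \pmod{2}} a_k = -1$ for every $m \geq 0$ and $r \in \{0,1\}$. I would establish this by counting: the positive block $[p_2(2m), p_1(2m+1)-1]$ consists of $4m+1$ consecutive integers and hence contains $2m+1$ of parity $p_2(2m) \pmod{2}$ and $2m$ of the opposite parity, while the negative block $[p_2(2m+1), p_1(2m+2)-1]$ contains $2m+2$ and $2m+1$ respectively. The elementary identity $p_2(2m) \equiv p_2(2m+1) \equiv m \pmod{2}$ ensures both blocks start at the same parity, so either fixed-parity sum over $\mathcal{I}_m$ telescopes to $-1$.

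For $i$ in the range $p_1(2n)-2 \leq i \leq p_1(2n+2)-3$, I would split the sum defining $b_i$ according to which pair $\mathcal{I}_m$ each index lies in. The pairs $\mathcal{I}_0, \dots, \mathcal{I}_{n-1}$ each contribute $-1$: the only element of $\mathcal{I}_{n-1}$ that could exceed the smallest value $p_1(2n)-2$ in our range is $p_1(2n)-1$, and that has parity opposite to $p_1(2n)-2$, so for every $i$ in the range the full same-parity sum over $\mathcal{I}_{n-1}$ is captured. This yields the uniform $-n$ present in all five cases. What remains is the partial contribution from $\mathcal{I}_n$, and the five cases correspond exactly to whether $i$ lies below the positive block of $\mathcal{I}_n$, inside the positive block, inside the central zero block (split by parity), or inside the negative block.

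Each remaining case is then a direct count. Case~1 contributes nothing beyond $-n$. Case~2 counts $\lfloor (i-p_2(2n))/2 \rfloor + 1$ positive entries of parity $i$ in $[p_2(2n), i]$, giving $1 - n + \lfloor (i-p_2(2n))/2 \rfloor$. Cases~3 and 4 arise once $i$ has passed the positive block: the total same-parity positive contribution is $2n+1$ or $2n$ depending on whether $i \equiv p_2(2n) \pmod{2}$, and nothing new accrues until the negative block begins, giving $1+n$ and $n$ respectively. Case~5 adds a partial $-\lceil (i-p_2(2n+1))/2 \rceil$ negative contribution on top of the Cases~3/4 value, with the ceiling absorbing the parity split. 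The main obstacle is bookkeeping: handling the two parities uniformly through floor and ceiling, and verifying that neighboring cases agree at the boundaries $i = p_1(2n+1)-1$ and $i = p_2(2n+1)-1$.
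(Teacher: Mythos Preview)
Your proposal is correct and follows exactly the route the paper takes: the paper establishes $b_i = \sum_{j\ge 0,\; i-2j\ge 0} a_{i-2j}$ from $(q^3;q)_\infty = (q^2;q)_\infty/(1-q^2)$ and records the parity-refined identity $\sum_{k\in\mathcal{I}_m,\;k\equiv r\ (\mathrm{mod}\ 2)} a_k = -1$, then simply states the theorem with the remark ``with that, one can prove the following.'' Your write-up fills in the case analysis the paper omits, using the same two ingredients in the same way.
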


As an example, if $i=10^{100}$, then $n=40824829046386301636621401245098189866099124677611$. Moreover, for this particular $i$ the second case of the formula above applies. Hence, after the addition of three numbers we get, $b_{i} = -19888090251390639910818356938628130689602741018379$. 

Theorem~\ref{q3;q_infty} already says that series expansion of $(q^3;q)_\infty$ is not of Bloch--P\'olya type. Moreover, \textit{every integer occurs as a coefficient of $(q^3;q)_\infty$ infinitely many times}. For illustrative purposes, some first appearances of non-zero coefficient sizes are \begin{equation}\label{coeffSizes}(q^3;q)_\infty = 1 +\dots + 2q^{11}+\dots +3q^{34} +\dots+4q^{69}+\dots + 5q^{116} + \dots + 6q^{175}+\dots + 7q^{246}+\dots .\end{equation}

We remark that Euler's Pentagonal Number Theorem (Theorem~\ref{EPNT_THM}) has a partition-theoretic interpretation. A finite sequence $\pi=(\lambda_1,\lambda_2,\dots)$ of non-increasing positive integers is called a \textit{partition}. Let $\mathcal{D}$ be the set of partitions into distinct parts (i.e. $\lambda_i\not=\lambda_j$ for $i\not=j)$, $\nu(\pi)$ be the number of parts of partition $\pi$, and $|\pi|$ is the sum of all the parts of $\pi$. The empty sequence is the unique partition of zero. Then \eqref{EPNT} can be interpreted as \[(q;q)_\infty = \sum_{n\geq 0} \left( \sum_{\substack{\pi \in \mathcal{D}\\ |\pi|=n} } (-1)^{\nu(\pi)} \right) q^n.\] Similar to this interpretation one can also interpret Theorem~\ref{q2;q_inf_thm} and \ref{q3;q_infty} as a partition theorem.
\begin{theorem}
\[a_j = \sum_{\substack{\pi \in \mathcal{D}\\ |\pi|=j \\ s(\pi)>1} }(-1)^{\nu(\pi)}\ \text{and}\ b_i = \sum_{\substack{\pi \in \mathcal{D}\\ |\pi|=i \\ s(\pi)>2} }(-1)^{\nu(\pi)}, \]
where $s(\pi)$ is the smallest part of partition $\pi$, and $a_j$ and $b_i$ are defined as in Theorem~\ref{q2;q_inf_thm} and \ref{q3;q_infty}. 
\end{theorem}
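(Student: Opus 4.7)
The plan is to prove both identities by directly expanding the infinite product definitions of $(q^2;q)_\infty$ and $(q^3;q)_\infty$, exactly in the spirit of the partition-theoretic restatement of Euler's Pentagonal Number Theorem given in the paragraph just above the statement. In both cases the needed observation is the standard combinatorial interpretation of a truncated $q$-Pochhammer symbol.

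First, for $a_j$, I would write $(q^2;q)_\infty=\prod_{k=2}^{\infty}(1-q^k)$ and expand the product by selecting either $1$ or $-q^k$ from each factor. Each resulting term is indexed by a finite subset $S\subseteq\{2,3,4,\dots\}$ and contributes $(-1)^{|S|}q^{\sum_{k\in S}k}$. These subsets are in natural bijection with partitions $\pi\in\mathcal D$ whose parts are exactly the elements of $S$, and under this bijection $|S|=\nu(\pi)$, $\sum_{k\in S}k=|\pi|$, and the restriction $S\subseteq\{2,3,\dots\}$ becomes exactly $s(\pi)>1$. Extracting the coefficient of $q^j$ on both sides then yields the claimed formula for $a_j$.

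The formula for $b_i$ follows by running the same argument for $(q^3;q)_\infty=\prod_{k=3}^{\infty}(1-q^k)$, where the indexing set becomes $\{3,4,5,\dots\}$ and the corresponding condition on partitions becomes $s(\pi)>2$. There is no serious obstacle here: the only point requiring care is matching the lower bound on the index $k$ in the product to the lower bound on the smallest part $s(\pi)$. Notably, the explicit piecewise formulas for $a_j$ and $b_i$ given in Theorems~\ref{q2;q_inf_thm} and \ref{q3;q_infty} are not needed for this corollary; what is used is only that those $a_j$ and $b_i$ are, by definition, the power series coefficients of $(q^2;q)_\infty$ and $(q^3;q)_\infty$ respectively.
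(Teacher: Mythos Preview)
Your argument is correct and is precisely the intended one: the paper does not give a separate proof of this theorem but presents it as the immediate partition-theoretic reading of the products $(q^2;q)_\infty$ and $(q^3;q)_\infty$, in direct analogy with the paragraph just above interpreting $(q;q)_\infty$. Your expansion of $\prod_{k\geq m}(1-q^k)$ as a signed sum over partitions into distinct parts with $s(\pi)>m-1$ is exactly that interpretation, and you are right that the explicit piecewise formulas for $a_j$ and $b_i$ play no role here.
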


Now we can give an easy proof of Theorem~\ref{qq_m_not_Bloch_Polya}. To this end, we will define $\llfloor q^i \rrfloor f(q)$ to be the $q^i$'s coefficient in the power series expansion of $f(q)$.

\begin{proof} \textbf{(Theorem~\ref{qq_m_not_Bloch_Polya})} Initial cases of $q$-factorials can easily be checked to be Bloch--P\'olya type polynomials for $m=0$ to $5$, except for $m=4$: 
\begin{align}
\nonumber(q;q)_0 &=1,\\
\nonumber(q;q)_1 &= 1-q,\\
\nonumber(q;q)_2 &= 1-q-q^2+q^3,\\
\nonumber(q;q)_3 &= 1-q-q^2+q^4+q^5-q^6,\\
\nonumber(q;q)_4 &= 1-q-q^2+2\, q^5-q^8-q^9+q^{10},\\
\nonumber(q;q)_5 &= 1-q-q^2+q^5+q^6+q^7-q^8-q^9-q^{10}+q^{13}+q^{14}-q^{15}.\\
\intertext{It is clear that $\llfloor q^5 \rrfloor (q;q)_4 = 2$. Also observe that }
\nonumber\llfloor q^7 \rrfloor (q;q)_6 &=2,\\
\nonumber\llfloor q^{12}\rrfloor(q;q)_m &= -2,\text{ for }\ 7\leq m\leq 9,\\
\nonumber\llfloor q^{15} \rrfloor(q;q)_{10} &= -2,\\
\nonumber-3\leq \llfloor q^{2m+22} \rrfloor (q;q)_m &\leq -2,\text{ for }\ 11\leq m\leq 20.
\intertext{Another example that will come in handy is}
\label{specialcase}\llfloor q^{51} \rrfloor (q;q)_{41}&=2.
\end{align}

For some $m\geq 1$, choose $(a,q,t)= (0,q,q^{m})$ in the $q$-binomial theorem \eqref{qbinomial}. Multiplying both sides of this special case with $(q;q)_\infty$, one can easily show that \begin{equation}\label{q_sub_m}
(q;q)_{m-1} = \sum_{i\geq 0} q^{mi} (q^{i+1};q)_\infty= (q;q)_\infty + q^{m}(q^2;q)_\infty + q^{2m}(q^3;q)_\infty \dots.
\end{equation}

Note that for any $2m \leq n <3m$ the contribution for the coefficient of $q^n$ of $(q;q)_{m-1}$ comes only from the first three terms of the expansion in \eqref{q_sub_m}. Keeping \eqref{coeffSizes} in mind, for $m > 69$ one can deduce that \[2\leq \llfloor q^{2m+69} \rrfloor (q;q)_{m-1}\leq 6,\] since the coefficients of $q^{2m+69}$ and $q^{m+69}$ of $(q;q)_\infty$ and $(q^2;q)_\infty$ are in the set $\{-1,0,1\}$ by Theorem~\ref{EPNT_THM} and Theorem~\ref{q2;q_inf_thm}, respectively. We can now directly verify the claim for the intermediate interval of unchecked $m$ values ($m\geq 22$ and the inequality $2m \leq 2m+69 <3m$ does not hold) that \[2\leq\llfloor q^{2m+69} \rrfloor (q;q)_{m-1}\leq12\ \text{for}\ 22\leq m \leq69\ \text{and}\ m\not= 42.\] Recall that $m=42$ case was handled in \eqref{specialcase} above.
\end{proof}

\section{Recurrence Relations for $F_{k,M}(q)$ and a Proof of Theorem~\ref{highlight_F}}\label{Section_F}

We start by the recurrence relations for the $F_{k,M}(q)$ functions.

\begin{lemma}\label{finite_recurrence_lemma}For $k \geq 1$  \[q^{k+1} F_{k+1,M}(q) = 1+(q^k-1)F_{k,M}(q) - q^{k(M+1)}(q;q)_{M+1}.\]
\end{lemma}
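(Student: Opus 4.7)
The plan is to manipulate the left-hand side $q^{k+1}F_{k+1,M}(q)$ by an index shift, then exploit the one-step recursion $(q;q)_i=(1-q^i)(q;q)_{i-1}$, which after rearrangement gives the telescoping-style identity $q^i(q;q)_{i-1}=(q;q)_{i-1}-(q;q)_i$. This is the only real identity needed; everything else is bookkeeping on indices of summation.

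First I would expand, using the definition \eqref{recurr_F_finite},
\[
q^{k+1}F_{k+1,M}(q)=\sum_{j=0}^{M}q^{(k+1)(j+1)}(q;q)_j,
\]
and reindex with $i=j+1$, obtaining $\sum_{i=1}^{M+1}q^{ki}\cdot q^i(q;q)_{i-1}$. Then I would substitute $q^i(q;q)_{i-1}=(q;q)_{i-1}-(q;q)_i$ inside the sum and split it as
\[
\sum_{i=1}^{M+1}q^{ki}(q;q)_{i-1}\;-\;\sum_{i=1}^{M+1}q^{ki}(q;q)_i.
\]

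Next I would identify each piece in terms of $F_{k,M}$. In the first sum, the shift $j=i-1$ pulls out $q^k$ and yields $q^kF_{k,M}(q)$. In the second sum, extending the range of summation down to $i=0$ introduces the extra term $(q;q)_0=1$ to be subtracted, and similarly extending (or not) the upper index gives $F_{k,M+1}(q)-1$; using $F_{k,M+1}(q)=F_{k,M}(q)+q^{k(M+1)}(q;q)_{M+1}$ expresses this in the form appearing on the right-hand side of the lemma.

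Assembling the two pieces gives
\[
q^{k+1}F_{k+1,M}(q)=q^kF_{k,M}(q)-F_{k,M}(q)+1-q^{k(M+1)}(q;q)_{M+1},
\]
which is exactly the claimed identity. The argument is direct and there is no real obstacle; the only step worth highlighting is recognizing that the factor $q^i(q;q)_{i-1}$ produced by the index shift rewrites as a difference of consecutive $q$-Pochhammer symbols, which is what allows the sum to collapse cleanly into $F_{k,M}$ terms plus a single tail contribution.
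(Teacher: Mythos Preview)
Your proof is correct and uses essentially the same idea as the paper: both arguments hinge on the one-step recursion $q^{i}(q;q)_{i-1}=(q;q)_{i-1}-(q;q)_i$ combined with an index shift to pass between $F_{k,M}$ and $F_{k+1,M}$. The only cosmetic difference is direction: the paper starts from $(q^k-1)F_{k,M}(q)$, lands on $q^{k+1}F_{k+1,M-1}(q)$, and then adds the missing top term, whereas you start from $q^{k+1}F_{k+1,M}(q)$ and split it directly into $q^kF_{k,M}$ and $F_{k,M}+q^{k(M+1)}(q;q)_{M+1}-1$, which is marginally cleaner but not a genuinely different route.
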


\begin{proof} Observe that \[(q^k-1)F_{k,M}(q) = \sum_{n=1}^{M+1} q^{kn} (q;q)_{n-1} - \sum_{n=0}^{M}q^{kn}(q;q)_n = q^{k+1}F_{k+1,M-1}(q) + q^{k(M+1)}(q;q)_M -1.\]
Adding $q^{(k+1)(M+1)}(q;q)_M$, we get the term $q^{k+1}F_{k+1,M}(q)$ on the right side of the equation. Isolating this term yields the result.
\end{proof}

Let $\mathcal{D}_{1,k}$ be the set of non-empty partitions into distinct parts congruent to 1 modulo $k$, and $\nu(\pi)$ be the number of parts of the partition $\pi$. We note that \begin{equation}\label{GF_of_1_mod_k_kind} \sum_{\substack{\pi\in\mathcal{D}_{1,k}\\l(\pi) \leq kM+1}} (-1)^{\nu(\pi)+1} q^{|\pi|} = \sum_{j=0}^{M} q^{kj+1} (q;q^k)_j = 1-(q;q^k)_{M+1},\end{equation} where $l(\pi)$ is the largest part of $\pi$.

The $q$-factorial $(q;q^k)_{M+1}$ is the generating function for the partitions $\pi^*$ into distinct parts $\leq kM+1$, each 1 modulo $k$, counted with the weights $(-1)^{\nu(\pi^*)}$. The summand of the middle term of \eqref{GF_of_1_mod_k_kind} is the generating function for partitions into distinct parts, each 1 modulo $k$ counted with the weight $(-1)^{\nu(\pi)+1}$ where the largest part is $jk+1$. Summing from $j=0$ to $M$, we get the generating function for the number of partitions into distinct parts $\leq kM+1$, each $1$ modulo $k$. This justifies the first equality of \eqref{GF_of_1_mod_k_kind}. The second equality can also be clarified in the same manner. We multiply $(q;q^k)_{M+1}$ by $-1$ to match the weight $(-1)^{\nu(\pi)+1}$ and add 1 to remove the empty partition from our calculations.

We will later refer to this special case of \eqref{GF_of_1_mod_k_kind}, where $k$ is 1: \begin{equation}\label{GF_of_F_1_case}qF_{1,M}(q) =\sum_{j=0}^{M} q^{j+1}(q;q)_j =  1-(q;q)_{M+1}.\end{equation} This special case also appears in \cite[(5)]{AndrewsEuler}.



Now we can prove some results about the coefficients of $F_i(q)$ functions.

\begin{theorem}\label{main_F_THM}
\begin{enumerate}[i.]
\item \label{F1_Bloch_Polya_type_THM}$F_1(q)$ is of Bloch--P\'olya type,
\item \label{F2_Bloch_Polya} $F_2(q)$ is of Bloch--P\'olya type,
\item \label{F3_F4_Bloch_Polya_type}$F_3(q)-q^9 $ and $F_4(q)-q^{16} +q^{18} + q^{30} - q^{31}$ are both of Bloch--P\'olya type,
\item $F_5(q)$ is not Bloch--P\'olya type series and there is no polynomial $f(q)$ such that $F_5(q)+f(q)$ is one,

\item $F_6(q)-f_6(q)$ is Bloch--P\'olya type series, where \begin{align*} f_6(q):=& \, q^{29}-q^{32}+q^{36}-q^{38}+q^{43}-q^{45}+q^{50}-q^{56
}+q^{57}+q^{58}-q^{62}-q^{63}+q^{64}\\&+q^{71}-q^{80}-q^{
81}+q^{84}+q^{85}+q^{106}-q^{110}-q^{239}+q^{241}+q^{280
}-q^{281},
\end{align*}
\item and for $k\geq 7$, there is no polynomial $f(q)$ such that $F_k(q) + f(q)$ is of Bloch-P\'olya type.
\end{enumerate}
\end{theorem}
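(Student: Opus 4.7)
The plan is to rewrite $F_k(q)$ in a form whose only infinite-series component is the product $(q;q)_{k-1}(q;q)_\infty$, and then use Theorem~\ref{qq_m_not_Bloch_Polya} and Lemma~\ref{penta_gap} to exhibit infinitely many coefficients of absolute value $\geq 2$. Since a polynomial $f(q)$ can alter only finitely many coefficients, this will preclude $F_k(q)+f(q)$ from being Bloch--P\'olya whenever $k\geq 7$.

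Letting $M\to\infty$ in Lemma~\ref{finite_recurrence_lemma} yields the convergent recurrence $q^{k+1}F_{k+1}(q)=1+(q^k-1)F_k(q)$, with base case $qF_1(q)=1-(q;q)_\infty$ supplied by \eqref{GF_of_F_1_case}. I would prove by induction on $k$ that
\[
q^{\binom{k+1}{2}}F_k(q)=P_k(q)+(-1)^k(q;q)_{k-1}(q;q)_\infty,
\]
where $P_k(q)$ is a polynomial of degree $\binom{k}{2}$ determined by $P_1(q)=1$ and $P_{k+1}(q)=q^{\binom{k+1}{2}}+(q^k-1)P_k(q)$. The induction step is mechanical: multiply the recurrence by $q^{\binom{k+1}{2}}$, substitute the hypothesis, and use $(q^k-1)(q;q)_{k-1}=-(q;q)_k$ to fold the infinite-series piece into the predicted form with sign $(-1)^{k+1}$.

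Because $\deg P_k(q)=\binom{k}{2}<\binom{k+1}{2}$, this identity says that for every $n\geq 0$ we have $\llfloor q^n\rrfloor F_k(q)=(-1)^k\llfloor q^{n+\binom{k+1}{2}}\rrfloor(q;q)_{k-1}(q;q)_\infty$. The task therefore reduces to showing that $(q;q)_{k-1}(q;q)_\infty$ has infinitely many coefficients of absolute value $\geq 2$. For $k\geq 7$ we have $k-1\notin\{0,1,2,3,5\}$, so Theorem~\ref{qq_m_not_Bloch_Polya} furnishes an $n_0\in[0,\binom{k}{2}]$ with $|c_{n_0}|:=|\llfloor q^{n_0}\rrfloor(q;q)_{k-1}|\geq 2$. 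For each pentagonal number $p=p_1(r)$ or $p_2(r)$ with $r>\binom{k}{2}$, Lemma~\ref{penta_gap} ensures every other pentagonal number lies at distance $>\binom{k}{2}$ from $p$; expanding
\[
\llfloor q^{p+n_0}\rrfloor(q;q)_{k-1}(q;q)_\infty=\sum_{i=0}^{\binom{k}{2}}c_i\cdot\llfloor q^{p+n_0-i}\rrfloor(q;q)_\infty,
\]
each summand with $i\neq n_0$ would require a pentagonal number at distance $0<|n_0-i|\leq\binom{k}{2}$ from $p$, which is impossible. Hence only $i=n_0$ survives, contributing $\pm c_{n_0}$ of absolute value $\geq 2$; since infinitely many such $p$ exist, the theorem follows. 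The principal work is guessing and verifying the closed form for $F_k(q)$; once it is in place, the reduction to the Pentagonal Number Theorem and the gap estimate of Lemma~\ref{penta_gap} handle everything else.
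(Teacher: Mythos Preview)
Your proposal addresses only part vi (equivalently Theorem~\ref{highlight_F}), and for that part it is correct and follows essentially the same route as the paper: both derive the identity \eqref{recurr_back} (your $q^{\binom{k+1}{2}}F_k(q)=P_k(q)+(-1)^k(q;q)_{k-1}(q;q)_\infty$ is exactly that formula with the polynomial part left unspecified), then invoke Theorem~\ref{qq_m_not_Bloch_Polya} together with Lemma~\ref{penta_gap} to conclude that the tail of $F_k(q)$ consists of non-overlapping shifted copies of $\pm(q;q)_{k-1}$ and hence has infinitely many coefficients of absolute value $\geq2$. Your convolution argument makes this last step a bit more explicit than the paper, which simply points back to the $k=5$ case, but the content is identical.

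Parts i--v of the theorem are not covered by your proposal. They rely on the same closed form \eqref{recurr_back} but in the opposite direction: when $k-1\in\{0,1,2,3,5\}$ the factor $(q;q)_{k-1}$ \emph{is} Bloch--P\'olya, so the tail is automatically Bloch--P\'olya and one must check, by explicit finite computation, the initial segment of $F_k(q)$ (this is where the correction polynomials $q^9$, $f_6(q)$, etc.\ come from); for $k=5$ the factor $(q;q)_4$ is not Bloch--P\'olya and your argument applies verbatim. These finite verifications are carried out in the paper and would need to be added to complete the proof of the full statement.
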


The item $vi.$ of Theorem~\ref{main_F_THM} is the earlier highlighted Theorem~\ref{highlight_F}.

\begin{proof}
\begin{enumerate}[i.]
\item Taking the limit $M\rightarrow\infty$ on the extreme sides of \eqref{GF_of_F_1_case}, combined with \eqref{EPNT}, we have  \begin{equation}\label{qF1_open_form}qF_1(q) = 1-(q;q)_\infty =  q+  q^2- q^5-  q^7+  q^{12}+  q^{15}\dots.\end{equation} This is enough to show that $F_1(q)$ is of Bloch--P\'olya type.\\

\noindent The proofs of the rest of the cases ii. and iii. will rely a combination on Theorem~\ref{EPNT_THM}, Theorem~\ref{qq_m_not_Bloch_Polya}, Lemma~\ref{penta_gap}, and Lemma~\ref{finite_recurrence_lemma} with $M\rightarrow \infty$. The combination of Lemma~\ref{finite_recurrence_lemma} with $M\rightarrow \infty$, together with \eqref{qF1_open_form} yields
\begin{equation}\label{recurr_back} q^{k(k+1)/2}F_{k}(q) = \sum_{i=0}^{k-1} (-1)^i (q^{k-i};q)_{i} q^{(k-1-i)(k-i)/2} + (-1)^k (q;q)_{k-1}(q;q)_\infty, 
\end{equation} for $k\geq 1$.

\item The difference between successive pentagonal numbers (which appear in the exponent of $q$) is greater than 1 for exponents of $q$ greater than or equal to $p_1(2)=5$, by Lemma~\ref{penta_gap}. Therefore, the series $ q^5+  q^7-  q^{12}-  q^{15}\dots $ and $q( q^5+  q^7-  q^{12}-  q^{15}\dots ) = q^6 + q^8 -q^{13} - q^{16}\dots$ does not share any common exponents of $q$. Hence, their difference $(1-q)( q^5+  q^7-  q^{12}-  q^{15}\dots )$ has all the exponents of $q$ greater than or equal to $5$ and it remains Bloch--P\'olya type series.

Using \eqref{recurr_back}, we get \[q^3F_2(q) = -1 + 2q + (1-q)(q;q)_\infty.\] Using \eqref{EPNT} once again

\begin{align*}q^3F_2(q)&=-1 + 2q + (1-q)( 1 -  q-  q^2+ q^5+  q^7-  q^{12}-  q^{15}\dots )\\
&= -1 +2q + (1-q)(1-q -q^2) + (1-q)( q^5+  q^7-  q^{12}-  q^{15}\dots )\\ 
&= q^3 +  (1-q)( q^5+  q^7-  q^{12}-  q^{15}\dots ).\end{align*}

Above line with the previous observation shows that $q^3F_2(q)$ is of Bloch--P\'olya type. One can divide the series expansions of $q^3F_2(q)$ with $q^3$ to show the claimed results.

\item For $k\geq 3$, one gets power series coefficients with modulus $\geq2$ in the expansion of $F_k(q)$, but in the initial cases there are only finitely many exceptions which can be corrected. Using \eqref{recurr_back}, we get 
\begin{align*}
q^6F_3(q) &= q^3-q(1-q^2)+(q;q)_2-(q;q)_2(q;q)_\infty,\\
 &= 1-2q-q^2+3q^3-(1-q-q^2+q^3)(q;q)_\infty,\\
q^{10}F_4(q) &= q^6-q^3(1-q^3) + q(1-q^2)(1-q^3) - (q;q)_3+ (q;q)_3(q;q)_\infty,\\
&= -1+2q+q^2-2q^3-2q^4-q^5+4q^6 + (1-q-q^2+q^4+q^5-q^6)(q;q)_\infty.
\end{align*}

The $q$-factorials $(q;q)_2$ and $(q;q)_3$ have degrees $3$ and $6$, respectively. Difference between the pentagonal numbers are larger than $3$ and $6$ starting from the pentagonal numbers $22$ and $70$ by Lemma~\ref{penta_gap}. Using Theorem~\ref{EPNT_THM} and splitting the series at these pentagonal numberswe get
\begin{align}
\label{tail1}q^6F_3(q) =& q^6 +q^9 - q^{10} + q^{12} - q^{13} - q^{14} + 2q^{15}-q^{16}-q^{17}+q^{18}\\
\nonumber& -(q;q)_2(q^{22}+q^{26}- q^{35}- q^{40}+  q^{51}\dots),\\
\label{tail2}q^{10}F_4(q) =& q^{10}+q^{14}-q^{15}+q^{18}-q^{19}-q^{20}+q^{21}+q^{22}-q^{23}-q^{24}+2q^{26}-2q^{28}\\
\nonumber&+q^{30}+q^{31}-q^{32}-q^{35}+q^{36}+q^{37}-q^{39}-2q^{40}+2q^{41}+q^{42}-{q}^{44}\\
\nonumber&-q^{45}+q^{46}+q^{51}-q^{52}-q^{53}+q^{55}+q^{56}-q^{58}-q^{59}+q^{61}+q^{62}-q^{63}\\
\nonumber&-(q;q)_3(q^{70}+q^{77}-q^{92}-q^{100}+q^{117}\dots).
\end{align}

The fact that $(q;q)_2$ and $(q;q)_3$ being of Bloch-P\'olya type, ensures that the tail ends of \eqref{tail1} and \eqref{tail2} are of Bloch-P\'olya type. The explicit equations \eqref{tail1} and \eqref{tail2} are enough to prove the claims. All one needs to do is to divide both sides of these equations with $q^6$ and $q^{10}$, respectively and add in the claimed correction factors.

\item The argument for non-zero coefficients being $\pm 1$ for the tail end can be used in the opposite direction as well. By \eqref{recurr_back} we get $q^{15} F_5(q) = \dots - (q;q)_4(q;q)_\infty$. This implies that \begin{equation}\label{tail3} F_5(q) = P(q) + (q;q)_4(q^{161}+ q^{172}  -q^{195} -q^{207}\dots), \end{equation} where $P(q)$ is a polynomial of degree 150. Since $(q;q)_4 = 1-q-q^2+2q^5-q^8-q^9+q^{10}$, the tail of \eqref{tail3} can not be of Bloch--P\'olya type. Hence, $F_5(q)$ is neither Bloch--P\'olya type series, nor it can be made to be one by adding a polynomial correction term. 

\item Similar to cases i.-iii., as $(q;q)_5$ is Bloch--P\'olya polynomial, one can conclude $F_6(q)$, subject to a polynomial correction term $f_6(q)$, can be made a Bloch--P\'olya type series. More precisely, \[F_6(q)-f_6(q) = Q(q) + (q;q)_5(q^{355}+q^{371}-q^{404}-q^{421}\dots),\] where $Q(q)$ is a Bloch--P\'olya polynomial of degree 339.
\item (Proof of Theorem~\ref{highlight_F}) By Theorem~\ref{qq_m_not_Bloch_Polya} we know that $(q;q)_{k-1}$ is not of Bloch--P\'olya type for $k\geq 7$. Hence, following the steps of case iv., the tail of $F_k(q)$ for $k\geq 7$ cannot be of Bloch--P\'olya type. That implies that for $k\geq 7$, $F_k(q)$ is neither itself Bloch--P\'olya, nor can be corrected to be one by an addition of a polynomial.
\end{enumerate}
\end{proof}

\section{Further Observations}\label{Beyond}

Another topic to address is the classification of $(q;q)_m$ polynomials with coefficients from the set $\{-h,-h+1,\dots ,h-1, h\}$, for any positive integer $h$. Let $S_h$ be the set of all the $m$ values such that the coefficients of $(q;q)_m$ lie in between $-h$ and $h$, where at least one coefficient has the absolute value $h$. We already proved that $S_1 = \{0,1,2,3,5\}$ using \eqref{q_sub_m}. This argument can be repeated to find all $S_h$ for $h\geq 2$. As an example, from \eqref{coeffSizes}, it is easy to see that for all $m> 116$ \[3\leq \llfloor q^{2m+116} \rrfloor(q;q)_{m-1} \leq 7.\] Therefore, $m=116$ is the cut-off point for $S_2$ and one only needs to check  $m \leq 116$ manually to find all $m$ values in $S_2$. The general formula for the cut-off points for $S_h$ are \[p_1(2h+5)-1 = (h+2)(6h+17),\] where $p_1(n)$ is defined as in \eqref{penta_families}.

We display more sets, that are confirmed, and their related cut-off points in Table~\ref{Table_S_h}.

\begin{table}[h]\caption{List of $S_h$ for $h=1\dots 40$ with the cut-off values of $m$.}
\label{Table_S_h}
\scalebox{0.8}{\begin{tabular}{ccc||ccc||ccc||ccc}
$h$ & $S_h$ & Cut-off & $h$ & $S_h$ & Cut-off & $h$ & $S_h$ & Cut-off& $h$ & $S_h$ & Cut-off\\\hline
1&$\{0,1,2,3,5\}$& 69& 11 & $\{23\}$ & 1079 & 21&$\{27\}$&3289 & 31&$\emptyset$&6699\\
2&$\{4,6,7,8,9,11\}$&116& 12 & $\emptyset$&1246&22&$\emptyset$&3576&32&$\emptyset$&7106\\
3&$\{10,13,14\}$&175& 13&$\emptyset$&1425&23&$\emptyset$&3875&33&$\emptyset$&7525\\
4&$\{12,15\}$&246&14 &$\emptyset$& 1616&24&$\emptyset$&4186&34&$\{30\}$&7956\\
5&$\{17\}$ &329&15&$\emptyset$&1819&25&$\emptyset$&4509&35&$\emptyset$&8399\\
6&$\{16,18\}$&424&16&$\{24,25\}$&2034&26&$\emptyset$&4844&36&$\emptyset$&8854\\
7&$\{19\}$&531& 17&$\emptyset$&2261&27&$\emptyset$&5191&37&$\emptyset$&9321\\
8&$\{20,21\}$&650& 18&$\emptyset$&2500&28&$\{28\}$&5550&38&$\emptyset$&9800\\
9&$\emptyset$&781& 19&$\{26\}$&2751&29&$\{29\}$&5921&39&$\emptyset$&10291\\
10&$\{22\}$&924&20&$\emptyset$&3014&30&$\emptyset$&6304&40&$\emptyset$&10794\\
\end{tabular}}
\end{table}

The data in Table~\ref{Table_S_h} is consistent with the following.

\begin{conjecture} \[\text{Either }S_h = \emptyset\text{,  or  }S_h = \{i(h)\},\]
for $h>16$, where $i(h)$ is a positive integer, and \[i(h_1)>i(h_2)\text{  when  }h_1>h_2>16.\]
Moreover, for $h>5$, the set \[S_1\cup S_2\cup\dots\cup S_h=\{0,1,2,\dots ,M(h)\}\] consists of all consecutive integers from 0 up to some positive $M(h)$.
\end{conjecture}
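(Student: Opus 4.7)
The plan is to build on the identity \eqref{q_sub_m}, which expresses $(q;q)_{m-1}$ as a sum of shifted copies $q^{mi}(q^{i+1};q)_\infty$, together with the explicit coefficient formulas for $i=1,2,3$ from Theorems~\ref{EPNT_THM}, \ref{q2;q_inf_thm}, and \ref{q3;q_infty}. The key quantitative observation, already exploited in the proof of Theorem~\ref{qq_m_not_Bloch_Polya}, is that for $2m \leq n < 3m$ only the first three shifts in \eqref{q_sub_m} contribute to $\llfloor q^n\rrfloor(q;q)_{m-1}$; writing $n = 2m+j$ with $0 \leq j < m$, this coefficient equals $b_j$ plus an additive error in $\{-2,-1,0,1,2\}$. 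Thus the growth of the maximum absolute coefficient of $(q;q)_{m-1}$ in $m$ is essentially driven by the function $j \mapsto b_j$ of Theorem~\ref{q3;q_infty}.

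For the second claim (initial-segment structure of $S_1 \cup \cdots \cup S_h$ for $h>5$), I would proceed in two steps. First, extract the general cut-off $p_1(2h+5)-1$ already used in Section~\ref{Section_Pochhammer}: applying case~3 of Theorem~\ref{q3;q_infty} with $n=h+2$ shows $b_{j_h} = h+3$ at the position $j_h = p_1(2h+5)-1$, so for every polynomial index $m-1$ larger than this cut-off the coefficient of $q^{2m+j_h}$ in $(q;q)_{m-1}$ has absolute value at least $h+1$, and that index lies outside $S_1 \cup \cdots \cup S_h$. Second, since the cut-off $(h+2)(6h+17)$ is polynomial in $h$, the statement reduces to a bounded check for each $h$: verifying by direct computation that the remaining indices below $M(h)$ exhaust every value from $0$ up to $M(h)$ with no gap, where $M(h)$ coincides with the largest element of $S_h$.

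For the first claim (singleton or empty for $h > 16$), the plan is to show that the max-coefficient function $m \mapsto a_m^*$ is eventually strictly increasing in $m$, past an explicit threshold tied to the stabilisation visible in Table~\ref{Table_S_h} near $h = 16$. Concretely, for each large polynomial index $m$ one would identify the position $n_m$ at which $(q;q)_m$ attains its maximum absolute coefficient, and then argue by direct comparison with the analogous position in $(q;q)_{m+1}$, using the perturbation description above, that the maximum strictly grows by at least one unit. The monotone-in-$h$ property of $i(h)$ is an immediate corollary of this strict monotonicity.

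The hard part is the singleton claim. The $\pm 2$ perturbation from the first two shifts is harmless in isolation, but once $n \geq 3m$ the higher shifts $q^{im}(q^{i+1};q)_\infty$ with $i \geq 3$ also contribute, and the maximum absolute coefficient of $(q^i;q)_\infty$ through degree $n$ grows roughly as $\sqrt{n/i}$ (as one reads off from Theorem~\ref{q3;q_infty} and its unwritten analogues for higher $i$, which would first have to be derived in sufficiently sharp form). Controlling these higher-order contributions precisely enough to exclude two distinct indices $m$ from ever sharing the same max coefficient for every $h > 16$ seems to require a genuinely new idea beyond the perturbation technique used for Theorem~\ref{qq_m_not_Bloch_Polya}, and is in our view the principal difficulty in elevating the numerical evidence of Table~\ref{Table_S_h} to a theorem.
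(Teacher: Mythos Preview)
The statement you are attempting to prove is stated in the paper as a \emph{Conjecture}, not a theorem; the paper offers no proof whatsoever, only the numerical evidence of Table~\ref{Table_S_h} together with the cut-off formula $p_1(2h+5)-1=(h+2)(6h+17)$ that you correctly identify. There is therefore no ``paper's own proof'' against which to compare your proposal.

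Your outline is a sensible research plan rather than a proof, and you yourself say as much in the final paragraph. Two specific gaps are worth naming. First, for the initial-segment claim you reduce the problem to ``a bounded check for each $h$'', but the conjecture ranges over all $h>5$, so this is still an infinite family of computations; to turn it into a proof you would need a uniform argument showing, for instance, that $m\mapsto a_m^*$ is non-decreasing once $a_m^*\geq 6$. Second, for the singleton claim you propose to show that $a_m^*$ is eventually strictly increasing by locating the extremal position $n_m$ and comparing with $n_{m+1}$, but you give no mechanism for locating $n_m$; the identity \eqref{q_sub_m} only controls coefficients in the window $2m\le n<3m$ to within~$\pm2$, and there is no reason the maximum must occur there. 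As you note, once $n\ge 3m$ the higher shifts $q^{im}(q^{i+1};q)_\infty$ enter and the perturbation method of Theorem~\ref{qq_m_not_Bloch_Polya} no longer suffices. This is exactly why the authors left the statement as a conjecture.
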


Similarly, one can also define the set $\hat{S}_h$ for the series $F_k(q)$. Let $\hat{S}_h$ be the set of positive integers $k$ such that $F_k(q)$ has its coefficients from the set $\{-h,\dots, h\}$, where at least one coefficient has the absolute value $h$. Theorem~\ref{main_F_THM} shows that $1,2 \in \hat{S}_1$ and $3,4,6\in\hat{S}_2$. Moreover, similar to the proof of Theorem~\ref{main_F_THM}, using Lemma~\ref{penta_gap}, we can easily find the cut-off points, making sure that the pentagonal numbers are farther apart from the degree of $(q;q)_{k-1}$. Using this cut-off point, one can identify which $\hat{S}_h$ set $F_k(q)$ lies in by looking at the initial coefficients of $F_k(q)$ and the coefficients of $(q;q)_{k-1}$. For example, recall \eqref{tail3}, \[F_5(q) = P(q) + (q;q)_4(q^{161}+ q^{172}  -q^{195} -q^{207}\dots),\] where \[P(q) = 1 + q^5 + \dots -2q^{21} + \dots+ 3q^{30} + \dots + q^{150}.\] 
The polynomial $P(q)$ have all of its coefficients between $-2$ and 3. There is more than $10$ difference between all the exponents of $q$ with non-zero coefficients in the Bloch--P\'olya type series $q^{161}+ q^{172}  -q^{195} -q^{207}\dots$. The polynomial $(q;q)_4$ has degree $10$ and its largest absolute coefficient is 2. Hence, $(q;q)_4(q^{161}+ q^{172}  -q^{195} -q^{207}\dots)$ is a series with all its coefficients from the set $\{-2,\dots ,2\}$. Comparing $P(q)$ polynomial and the tail end of $F_5(q)$ we deduce that the maximum absolute coefficient of $F_5(q)$ is 3. Therefore, $5\in\hat{S}_3$. In general, it is sufficient to check the coefficients of $F_k(q)$ until the exponent \[ p_1( k(k-1)/2 + 1 )-k = \frac{(k-1)(3k^3-3k^2+10k-8)}{8}\] to classify its respective $\hat{S}_h$ set, where $p_1(n)$ is defined as in \eqref{penta_families}. This bound is used in comparison of the coefficients of $(q;q)_{k-1}$, which appears repeatedly as shifted copies in the tail end of $F_k(q)$, with the initial coefficients of $F_k(q)$.

We give a list of confirmed $\hat{S}_h$ sets in Table~\ref{Table_F_k}.

\begin{table}[h]\caption{List of $\hat{S}_h$ for $h=1\dots 42$.}
\label{Table_F_k}
\scalebox{0.8}{\begin{tabular}{cc||cc||cc||cc||cc||cc||cc}
$h$ & $\hat{S}_h$ 	&  $h$ 	& $\hat{S}_h$ 	&  $h$ 	& $\hat{S}_h$ &  $h$ 	& $\hat{S}_h$&  $h$ 	& $\hat{S}_h$&  $h$ 	& $\hat{S}_h$&  $h$ 	& $\hat{S}_h$\\\hline
1	&$\{1,2\}$		& 7	&$\{11,14\}$		&13	&$\emptyset$& 19	&$\emptyset$&25		&$\emptyset$&31		&$\emptyset$&37		&$\{25\}$\\
2	&$\{3,4,6\}$	& 8	&$\{13,15\}$		&14 	&$\{18\}$& 20	&$\emptyset$&26		&$\emptyset$&32		&$\emptyset$&38		&$\emptyset$\\
3	&$\{5,8\}$		& 9	&$\emptyset$		&15	&$\{19\}$&21		&$\emptyset$&27		&$\emptyset$&33		&$\emptyset$&39		&$\emptyset$\\
4	&$\{7,9\}$		& 10	&$\emptyset$	&16	&$\emptyset$&22		&$\emptyset$&28		&$\emptyset$&34		&$\emptyset$&40		&$\emptyset$\\
5	&$\emptyset$ 	& 11 	& $\{16\}$		&17	&$\{20\}$&23		&$\emptyset$&29		&$\{23\}$&35		&$\emptyset$&41	&$\emptyset$\\
6	&$\{10,12\}$	& 12 	& $\{17\}$		&18	&$\{21\}$	&24		&$\{22\}$	&30		&$\{24\}$&36		&$\emptyset$&42	&$\{26\}$
\end{tabular}}
\end{table}
\section{Acknowledgement}

We would like to thank George Andrews, Frank Garvan, Michael Mossinghoff, Larry Rolen, William Severa and the anonymous referee for their kind interest and helpful comments. 

Research of the first author is partly supported by the Simons Foundation, Award ID: 308929.

\end{document}